\newtheorem{lemma}{Lemma}
\newtheorem{theorem}{Theorem}
\newtheorem{corollary}{Corollary}
\newtheorem{proposition}{Proposition}
\newcommand {\E} {\mathbb{E}}
\newcommand {\p} {\mathbb{P}}
\newcommand {\ve} {\varepsilon}
\def\blfootnote{\xdef\@thefnmark{}\@footnotetext}\makeatother
\title[Large deviation inequalities for empirical processes]{\bf Fully explicit large deviation inequalities for empirical processes with applications to information-based complexity}
\author{Christoph Aistleitner} 
\address{ Institute of Financial Mathematics and applied Number Theory,
University Linz, Altenbergerstrasse~69, 4040~Linz, Austria}
\email{aistleitner@math.tugraz.at}
\thanks{The author is supported by a Schr\"odinger scholarship of the
Austrian Research Foundation (FWF)}
\subjclass[2010]{60F10; 62G30; 65D32; 65C05; 52C17; 11K38}
\begin{document}

\begin{abstract}
In the present paper we obtain fully explicit large deviation inequalities for empirical processes indexed by a Vapnik--Chervonenkis class of sets (or functions). Furthermore we illustrate the importance of such results for the theory of information-based complexity.
\end{abstract}

\date{}
\maketitle

\section{Introduction and statement of results} \label{sect_int}

Let $\xi_1, \xi_2, \dots$ be independent random variables taking values in a measurable space $(\mathbb{X}, \mathcal{X})$. The $n$-th empirical measure $\mathbb{P}_n$ is defined by
$$
\mathbb{P}_n := \frac{1}{n} \sum_{i=1}^n \delta_{\xi_i},
$$
where $\delta_x$ denotes the Dirac measure centered at $x$. Furthermore, the $n$-th empirical process $\alpha_n$ is defined by
$$
\alpha_n := \sqrt{n} \left( \mathbb{P}_n - \frac{1}{n} \sum_{i=1}^n \mathbb{P}^{(i)} \right),
$$
where $\mathbb{P}^{(i)}$ is the distribution of $\xi_i$. In the present paper we will only be concerned with the case when all $\xi_i$'s have the same distribution $\mathbb{P}$, in which case the definition of the empirical process reduces to
$$
\alpha_n := \sqrt{n} \left( \mathbb{P}_n - \mathbb{P} \right).
$$
Let $\mathcal{C}$ denote a class of elements of $\mathcal{X}$ (that is, a class of real-valued measurable subsets of $\mathbb{X}$); then for an element $C$ of $\mathcal{C}$ we have
\begin{equation} \label{ac}
\alpha_n(C) = n^{-1/2} \sum_{i=1}^n \left( \mathds{1}_C (\xi_i) - \mathbb{P} (C) \right),
\end{equation}
where $\mathds{1}_C$ denotes the indicator function of $C$. For a measure $\mathbb{Q}$ on $(\mathbb{X}, \mathcal{X})$, we set $\mathbb{Q}(f) = \int f ~d\mathbb{Q}$. Let $\mathcal{F}$ be a class of measurable functions on $(\mathbb{X},\mathcal{X})$. Then for $f \in \mathcal{F}$ we have $\mathbb{P} (f) = \mathbb{E} (f(\xi_i))$, and thus we are led to the definition
$$
\alpha_n (f) = n^{-1/2} \sum_{i=1}^n \left( f(\xi_i) - \mathbb{E} (f(\xi_i)) \right), \qquad f \in \mathcal{F},
$$
as a straightforward generalization of~\eqref{ac}. To understand the convergence properties of $\alpha_n$ it is a natural question to investigate the quantities
\begin{equation} \label{ans}
\sup_{C \in \mathcal{C}} |\alpha_n(C)| \qquad \textrm{and} \qquad \sup_{f \in \mathcal{F}} |\alpha_n(f)|,
\end{equation}
respectively. In the simple case of $\xi_1, \xi_2, \dots$ being ``ordinary'' real random variables and $\mathcal{C}$ being the class of subintervals of $\mathbb{R}$, a result of this type is the well-known Glivenko--Cantelli theorem. In the general case it turns out that the asymptotic order of the quantities in~\eqref{ans} depends on the complexity (in an appropriate sense) of the classes $\mathcal{C}$ and $\mathcal{F}$, respectively. For a general introduction into this theory, see the book of van der Vaart and Wellner~\cite{vdv}.\\

It turns out that a particular instance in which this problem can be efficiently handled is the case of $\mathcal{C}$ being a \emph{Vapnik--Chervonenkis class} (VC class). Let $A$ be a finite subset of $\mathbb{X}$. Then $\mathcal{C}$ is said to shatter $A$ if for every subset $B$ of $A$ there exists a $C \in \mathcal{C}$ such that $B = A \cap C$. If there exists a largest finite number $d$ such that $\mathcal{C}$ shatters at least one set of cardinality $d$, then $\mathcal{C}$ is called a VC class of index $d$. To avoid measurability problems we will assume throughout this paper that $\mathcal{C}$ is countable.  The notion of VC classes is well-established in the theory of empirical processes, since there exist strong bounds for the entropy of such classes. More precisely, there is a large number of concentration inequalities or large deviation inequalities for empirical processes indexed by a class of sets (or functions), provided that this class of sets (or functions) satisfies certain entropy 
conditions (see, for example,~\cite{tala2,tala}). On the other hand, there are many results on the entropy (such as, e.g., results on $L^2$-covering numbers) of VC classes, including results of Pollard~\cite{poll} and Dudley~\cite{dud} and culminating in the work of Haussler~\cite{hauss}.\\

Large deviation inequalities for empirical processes are an important tool in information-based complexity, where they are used to establish the existence of low-discrepancy point sets (that is, point sets having an empirical distribution which is ``close'' to a desired distribution). This connection is described in more detail in Section~\ref{sec:ibc}. However, as far as I know, there exist no reasonable large deviation inequalities for empirical processes indexed by a VC class of sets (or functions) which are \emph{completely explicit}, in the sense that they contain neither unspecified constants nor quantities depending on the entropy of the class of test sets in a complicated (inexplicit) way.\footnote{The only exception which I know of is Lemma 2 in~\cite{ad}, which is based on results of Alexander~\cite{alex}; however, the involved constants there are ridiculously large, and are of no practical use in the applications in information-based complexity which we have in 
mind.} However, for applications in numerical analysis any results containing unspecified quantities are clearly of limited use, and accordingly the main point in writing this note was to provide completely explicit inequalities for this problem, with a view towards applications in information-based complexity (and with a reasonable size of the constants involved).\\

Theorem~\ref{th1} below is a large deviation inequality for the supremum of an empirical process indexed by a VC class of sets. The subsequent Theorem~\ref{th2} is a similar result for empirical processes indexed by functions.

\begin{theorem} \label{th1}
Let $\xi_1, \dots, \xi_n$ be independent random variables with values in some measurable space $(\mathbb{X}, \mathcal{X})$, and having common distribution $\mathbb{P}$. Let $\mathcal{C}$ be a countable family of elements of $\mathcal{X}$. Assume that $\mathcal{C}$ is a VC class of index $d$. Then for
$$
Z = \sup_{C \in \mathcal{C}} \left|\sum_{i=1}^n \left(\mathds{1}_C(\xi_i) - \mathbb{P}(C) \right) \right|
$$
and for all positive real numbers $\eta$ and $x$ we have
$$
\textup{Pr} \left( Z \geq 1015(1+\eta) d \left(1+ \sqrt{1+\frac{2n}{1015 d}} \right) + 2 \sqrt{n x} + \left(2.5+32\eta^{-1}\right) x \right) \leq 2 e^{-x}.
$$
\end{theorem}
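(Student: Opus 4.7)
My plan is to combine two ingredients: a Talagrand--Bousquet-type concentration inequality for $Z$ around its mean, and an explicit upper bound on $\mathbb{E}[Z]$ obtained from symmetrization together with Dudley-type chaining against Haussler's sharp covering-number estimate. The concentration step will produce the additive $2\sqrt{nx}$ and $(2.5+32\eta^{-1})x$ contributions, while the entropy bound will produce the leading expectation term $1015 d\bigl(1+\sqrt{1+2n/(1015d)}\bigr)$, inflated by the factor $(1+\eta)$ inherited from the concentration step.

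For the concentration part I would split $Z=\max(Z_+,Z_-)$ with $Z_\pm=\sup_{C\in\mathcal{C}}\pm\sum_i(\mathds{1}_C(\xi_i)-\mathbb{P}(C))$ and apply an explicit form of Bousquet's (or Klein--Rio's) inequality to each. Since the summands are bounded in $[-1,1]$ and have variance at most $1$, a union bound yields a tail of the shape $\Pr\bigl(Z\ge\mathbb{E}[Z]+\sqrt{2x(an+b\mathbb{E}[Z])}+cx\bigr)\le 2e^{-x}$ with explicit small constants $a,b,c$. Splitting the square root via $\sqrt{A+B}\le\sqrt A+\sqrt B$ and applying $2\sqrt{uv}\le \eta u+\eta^{-1}v$ to the piece $\sqrt{bx\mathbb{E}[Z]}$ transfers a multiplicative factor $(1+\eta)$ onto $\mathbb{E}[Z]$, introduces the linear-in-$x$ term with the $\eta^{-1}$ coefficient, and leaves the $\sqrt{nx}$ piece unchanged; careful bookkeeping of each factor of $2$, $\sqrt 2$, and $1/3$ recovers the numbers $2$, $2.5$, and $32$.

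For the expectation part I would symmetrize, $\mathbb{E}[Z]\le 2\mathbb{E}\sup_{C}\bigl|\sum_i\ve_i\mathds{1}_C(\xi_i)\bigr|$ with Rademacher signs $\ve_i$, and then, conditionally on the sample, run a Dudley chaining argument in the $L^2(\mathbb{P}_n)$-metric, feeding in Haussler's explicit bound
\[
N(\mathcal{C},L^2(\mathbb{P}_n),\ve)\le e(d+1)(2e)^d\ve^{-2d},\qquad 0<\ve\le 1.
\]
A variance-aware chaining that keeps track of the random $L^2(\mathbb{P}_n)$-diameter of $\mathcal{C}$ (which is at most $\sup_C\sqrt{\mathbb{P}_n(C)}$, itself controlled by $Z$ and $n$) leads to a recursive inequality of the quadratic form $\mathbb{E}[Z]^2\le 2\cdot 1015\, d\,\bigl(\mathbb{E}[Z]+n\bigr)$; solving it gives exactly $\mathbb{E}[Z]\le 1015\,d\bigl(1+\sqrt{1+2n/(1015d)}\bigr)$, which correctly degenerates to $\approx 2\cdot 1015\, d$ when $n\ll d$ and to $\approx\sqrt{2\cdot 1015\, nd}$ when $n\gg d$.

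The main obstacle is clearly the expectation bound. To make the constant $1015$ (or any similar number) fully honest, one must (i) use Haussler's covering bound with its exact numerical constants rather than Pollard's or Dudley's looser versions, (ii) implement the chaining with geometric scales numerically optimized so that every loss of a factor $2$, $\sqrt 2$, or $\sqrt{\log 2}$ is accounted for, and (iii) avoid throwing away the small-$n$ regime by formulating the outcome as the quadratic recursion above rather than as a crude $\sqrt{nd}$ bound. Once this explicit expectation bound is in hand, its combination with the explicit concentration inequality of the first step and the $\eta$-splitting is essentially mechanical and yields the stated inequality.
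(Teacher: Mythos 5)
Your overall architecture is exactly the paper's: an explicit Talagrand-type concentration inequality around the mean (the paper quotes Massart's lemma verbatim, whose constants $\kappa=4$ and $\kappa(\eta)=2.5+32\eta^{-1}$ are precisely the numbers you are trying to recover; starting from Bousquet's inequality and doing the $\eta$-splitting by hand, as you propose, would in fact give slightly better constants, which is harmless), combined with an explicit bound on $\mathbb{E}(Z)$ obtained from symmetrization and chaining against Haussler's covering-number estimate. The paper outsources the latter to Lemmas 13.5 and 13.6 of Boucheron--Lugosi--Massart rather than redoing the chaining, and the resulting bound has exactly the quadratic self-improving form you describe. Your $Z_+/Z_-$ split also reproduces the factor $2e^{-x}$, which the paper instead obtains from a different two-way split of the class.

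The one concrete ingredient you are missing is the reduction to sets with $\mathbb{P}(C)\le 1/2$: the paper splits $\mathcal{C}$ into the subclass with $\mathbb{P}(C)\le 1/2$ and the class of complements of the sets with $\mathbb{P}(C)>1/2$ (complementation preserves the VC index and does not change the summands up to sign), and applies the expectation bound separately to each half. This matters because the quantity entering the chaining radius is the second moment $\sup_C\mathbb{E}\big(\mathds{1}_C(\xi_i)^2\big)=\sup_C\mathbb{P}(C)$, not the variance; without the reduction it can be as large as $1$, and the recursion becomes $\mathbb{E}(Z)^2\le 2\cdot 1015\,d\,\big(\mathbb{E}(Z)+2n\big)$ rather than the one you wrote, i.e.\ $4n$ instead of $2n$ under the square root (the paper records this weaker unreduced variant separately, with constant $914$ and $4n/(914d)$). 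Relatedly, your statement that the summands ``have variance at most $1$'' would only yield $\sqrt{8nx}$ from the concentration step; to get $2\sqrt{nx}$ you need $\sigma^2\le 1/2$, which is automatic here since $\mathbb{V}\big(\mathds{1}_C(\xi_i)\big)=\mathbb{P}(C)\big(1-\mathbb{P}(C)\big)\le 1/4$. With the complementation step added, your plan coincides with the paper's proof.
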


From Theorem~\ref{th1}, by choosing $\eta=1/100$ and assuming that $n \geq d$, we immediately obtain the following corollary, which is even easier to apply.
\begin{corollary} \label{co1}
If in Theorem~\ref{th1} we additionally assume that $n \geq d$, then
$$
\textup{Pr} \left( Z \geq 2052 \sqrt{dn} + 2 \sqrt{n x} + 3203 x \right) \leq 2 e^{-x},
$$
for every $x>0$.
\end{corollary}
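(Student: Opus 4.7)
The plan is to derive Corollary~\ref{co1} by substituting $\eta = 1/100$ into the bound of Theorem~\ref{th1} and then simplifying each of the three terms, using the extra hypothesis $n \geq d$. This is a purely algebraic verification; no probabilistic argument is needed, since the corollary only restates a special case of the theorem in a weaker and more convenient form.

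First I would handle the trivial simplifications. With $\eta = 1/100$, we have $1015(1+\eta) = 1025.15$ and the constant in front of $x$ becomes $2.5 + 32\cdot 100 = 3202.5 \leq 3203$, while the middle term $2\sqrt{nx}$ is unaffected. Thus the only substantive work is to show that, under $n\geq d$,
\[
1025.15\, d\left(1 + \sqrt{1 + \tfrac{2n}{1015 d}}\right) \;\leq\; 2052\sqrt{dn}.
\]

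To handle the main term I would pull the factor $d$ inside the square root, writing
\[
d\left(1 + \sqrt{1 + \tfrac{2n}{1015 d}}\right) \;=\; d + \sqrt{d^{2} + \tfrac{2dn}{1015}}.
\]
Since $n\geq d$ implies $d \leq \sqrt{dn}$ and $d^{2} + \tfrac{2dn}{1015} \leq dn\bigl(1 + \tfrac{2}{1015}\bigr) = \tfrac{1017}{1015}\, dn$, the right-hand side is at most $\bigl(1 + \sqrt{1017/1015}\bigr)\sqrt{dn}$. A direct numerical check gives $1 + \sqrt{1017/1015} < 2.001$, whence $1025.15\bigl(1 + \sqrt{1017/1015}\bigr)\sqrt{dn} < 2052\sqrt{dn}$, as required.

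The only potential obstacle is making sure the constants line up, i.e.\ that the numerical slack between $1025.15\cdot 2.001$ and $2052$, and between $3202.5$ and $3203$, is genuinely positive; both gaps are comfortable, so nothing delicate happens. Combining the three estimates, the event appearing inside the probability in Corollary~\ref{co1} is contained in the event appearing in Theorem~\ref{th1} with $\eta = 1/100$, and the probability bound $2e^{-x}$ transfers immediately.
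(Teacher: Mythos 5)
Your proposal is correct and matches the paper's (implicit) argument: the paper derives the corollary simply by setting $\eta=1/100$ in Theorem~\ref{th1} and using $n\geq d$, exactly as you do. Your explicit verification that $1025.15\bigl(1+\sqrt{1017/1015}\bigr)<2052$ and $2.5+3200\leq 3203$ fills in the routine numerics the paper leaves to the reader.
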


For the statement of Theorem~\ref{th2} we need the notion of a \emph{VC subgraph class}. Let $f$ be a function which maps $\mathbb{X}$ to $\mathbb{R}$. Then the \emph{subgraph} $C_f$ of $f$ is the set
$$
\big\{(x,t) \in \mathbb{X} \times \mathbb{R}:~t \leq f(x)\big\}.
$$
For a given family of functions $\mathcal{F}$, let $\mathcal{C}$ denote the class of all sets $C_f,~ f \in \mathcal{F}$. If $\mathcal{C}$ is a VC class, then $\mathcal{F}$ is called a VC subgraph class, and by definition the VC index of $\mathcal{F}$ is the VC index of $\mathcal{C}$. With this notation, we can formulate the following theorem.

\begin{theorem} \label{th2}
Let $\xi_1, \dots, \xi_n$ be independent random variables with values in some measurable space $(\mathbb{X}, \mathcal{X})$, and having common distribution $\mathbb{P}$. Let $\mathcal{F}$ be a countable family of real-valued, measurable, $\p$-centered functions on $(\mathbb{X},\mathcal{X})$ such that $\|f\|_\infty \leq 1$ for all $f \in \mathcal{F}$. Assume furthermore that $\mathcal{F}$ is a VC subgraph class of index $d$. Let 
$$
\sigma^2 = \sup_{f \in \mathcal{F}}~ \mathbb{E}(f^2(\xi_i)).
$$
Then for all positive real numbers $\eta$ and $x$, and for 
$$
Z=\sup_{f \in \mathcal{F}} \left| \sum_{i=1}^n f(\xi_i) \right|,
$$
we have
$$
\textup{Pr} \left( Z > (1 + \eta) \left( 60 \sqrt{d} \sqrt{n \sigma^2 \log (55/\sigma)} + 14400 d \log (55/\sigma) \right) + \sigma \sqrt{8 n x} + \kappa(\eta) x \right) \leq e^{-x}
$$
where $\kappa(\eta)=2.5 + 32 \eta^{-1}$. If we are willing to choose $\sigma=1$, then we obtain
$$
\textup{Pr} \left( Z > (1 + \eta) \left( 121 \sqrt{d} \sqrt{n} + 58000 d \right) + \sqrt{8 n x} + \kappa(\eta) x \right) \leq e^{-x},
$$
where $\kappa(\eta)$ is as above.
\end{theorem}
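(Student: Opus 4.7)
The plan is to decompose the right-hand side into two contributions: an upper bound for $(1+\eta)\mathbb{E}[Z]$, and a Bousquet/Talagrand-type concentration fluctuation $\sigma\sqrt{8nx} + \kappa(\eta)x$ on top. These correspond to two independent pieces of the argument, which I develop separately.

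For the concentration step, I would apply Bousquet's refinement of Talagrand's inequality to $Z$. Since every $f\in\mathcal{F}$ is centered, bounded by $1$, and has variance at most $\sigma^2$, Bousquet's inequality yields
$$
\textup{Pr}\!\left(Z \geq \mathbb{E}[Z] + \sqrt{2x(n\sigma^2 + 2\mathbb{E}[Z])} + x/3\right) \leq e^{-x}.
$$
Splitting the square root via $\sqrt{a+b}\leq\sqrt{a}+\sqrt{b}$ and then absorbing the cross term via AM-GM in the form $2\sqrt{x\mathbb{E}[Z]}\leq \eta\mathbb{E}[Z] + x/\eta$ converts this into a bound of the shape $(1+\eta)\mathbb{E}[Z] + \sigma\sqrt{cnx} + \kappa(\eta)x$. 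The precise coefficients stated in the theorem ($c=8$ and $\kappa(\eta)=2.5+32/\eta$) reflect a deliberately loose but clean regrouping of these terms, so that subdominant residuals from the expectation bound can be absorbed uniformly.

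For the expectation bound, I would symmetrize to pass to a Rademacher average, then apply Dudley's chaining inequality with Haussler's explicit $L^2$-covering-number estimate for VC subgraph classes. Haussler's theorem states that for a VC subgraph class $\mathcal{F}$ of index $d$ with $\|f\|_\infty\leq 1$, one has $N(\epsilon,\mathcal{F},L^2(\mathbb{Q}))\leq e(d+1)(4e)^d\epsilon^{-2d}$ uniformly in the probability measure $\mathbb{Q}$. Since $\|f\|_{L^2(\mathbb{P})}\leq\sigma$, Dudley's integral $\sqrt{n}\int_0^{\sigma}\sqrt{\log N(\epsilon,\mathcal{F},L^2(\mathbb{P}_n))}\,d\epsilon$ evaluates to $O(\sqrt{nd\sigma^2\log(1/\sigma)})$, which supplies the leading term $60\sqrt{d}\sqrt{n\sigma^2\log(55/\sigma)}$. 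The additive $14400\,d\log(55/\sigma)$ absorbs the residual from the coarsest scale of the chain and the symmetrization step, while the constant $55$ inside the logarithm is chosen so that $\log(55/\sigma)>4$ even in the worst case $\sigma=1$. The second statement of the theorem is then a direct substitution: $60\sqrt{\log 55}\leq 121$ and $14400\log 55\leq 58000$.

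The main obstacle is the fully explicit constant tracking. While the overall architecture — symmetrization, Dudley chaining with Haussler's covering-number bound, and a Bousquet-type concentration inequality — is by now standard, obtaining the specific numerical constants $60$, $14400$, $55$, $2.5$, and $32$ demands meticulous bookkeeping at every step: the factor-of-$2$ loss in symmetrization, the dyadic decomposition in chaining, the numerical version of Haussler's inequality, and the AM-GM split in Bousquet's bound. In particular, the interplay between the multiplicative constants in Haussler's bound and the weights in the dyadic chain must be balanced to ensure the constants $60$ and $14400$ cover the two contributions cleanly. It is precisely this explicit bookkeeping — avoided in most treatments of these inequalities — that the paper is designed to carry out.
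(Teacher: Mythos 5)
Your architecture is the right one and matches the paper's at the conceptual level: write the threshold as $(1+\eta)\E(Z)$ plus a Talagrand-type fluctuation term, bound the fluctuation by a concentration inequality with explicit constants, and bound $\E(Z)$ using Haussler's entropy estimates for VC subgraph classes. For the concentration step the paper quotes Massart's explicit inequality, whose constants $\kappa=4$ (hence $\sigma\sqrt{8nx}$) and $\kappa(\eta)=2.5+32\eta^{-1}$ appear verbatim in the theorem; your Bousquet route would in fact give sharper constants ($\sigma\sqrt{2nx}$ and $1/3+1/\eta$ after the AM--GM split), so it is a legitimate alternative, but the specific numbers in the statement are inherited from Massart, not reverse-engineered from a loosened Bousquet bound.

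The genuine gap is in the expectation bound, which is where all the content of this theorem lives. You assert that symmetrization plus Dudley chaining ``supplies the leading term $60\sqrt{d}\sqrt{n\sigma^2\log(55/\sigma)}$'' and that $14400\,d\log(55/\sigma)$ ``absorbs the residual,'' but you never derive these constants, and a from-scratch chaining argument would not land on them. The paper does not chain by hand: it quotes an already-explicit moment bound of Gin\'e and Guillou, $\E(Z)\leq 30\sqrt{2v}\sqrt{n\sigma^2\log(5AU/\sigma)}+15^2 2^5 vU\log(5AU/\sigma)$ for classes with $N(\delta,\mathcal{F},L^2(\mathbb{Q}))\leq (AU/\delta)^v$, and then feeds in the covering bound $N(\delta,\mathcal{F},L^2(\mathbb{Q}))\leq(11/\delta)^{2d}$, so that $A=11$, $v=2d$, $U=1$ yield $55=5\cdot 11$, $60=30\sqrt{4}$, and $14400=15^2\cdot 2^5\cdot 2$ mechanically. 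Moreover, the covering bound you attribute to Haussler, $N(\epsilon,\mathcal{F},L^2(\mathbb{Q}))\leq e(d+1)(4e)^d\epsilon^{-2d}$, is not what Haussler proves: his result bounds $L^1$-\emph{packing} numbers of $[0,1]$-valued classes, and the paper must convert packing to covering, $L^1$ to $L^2$, and $[0,1]$-valued to centered $[-1,1]$-valued functions, each step costing a factor that is needed to arrive at $(11/\delta)^{2d}$ and hence at the $55$ inside the logarithm. Without either carrying out that conversion and an explicit chaining computation, or citing an explicit intermediate result as the paper does, the constants $60$, $14400$, and $55$ remain unproved assertions rather than conclusions.
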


\section{Applications to information-based complexity: existence of low-discrepancy point sets} \label{sec:ibc}

The theory of information-based complexity is concerned with the amount of information on an object (e.g. a function) which is necessary to approximate a quantity depending on this object up to a given precision. More specifically, in a typical instance the ``amount of information'' means the number of function evaluations, and the problem is to approximate the integral of a $d$-variate function $f$ (which may be assumed to come from a certain class of functions, e.g. satisfying certain smoothness assumptions). A standard method for approximating such integrals is the \emph{Quasi-Monte Carlo} (QMC) method, which is based on the fact that for points $\mathbf{x}_1, \dots, \mathbf{x}_n \in [0,1]^d$ and for a $d$-variate function $f$ having bounded variation $\textup{Var}_{HK} f$ on $[0,1]^d$ we have
\begin{equation} \label{koks}
\left|\int_{[0,1]^d} f(\mathbf{x})~d\mathbf{x} - \frac{1}{n} \sum_{i=1}^n f(\mathbf{x}_i) \right| \leq \left(\textup{Var}_{HK} f \right) D_n^* (\mathbf{x}_1, \dots, \mathbf{x}_n).
\end{equation}
Here $D_n^*(\mathbf{x}_1, \dots, \mathbf{x}_n)$ is the \emph{star-discrepancy} of the point set $\mathbf{x}_1, \dots, \mathbf{x}_n$, which is defined as 
\begin{equation} \label{dn}
D_n^*(\mathbf{x}_1, \dots, \mathbf{x}_n) = \sup_{A \in \mathcal{A}} \left| \frac{1}{n} \sum_{i=1}^n \mathds{1}_A (\mathbf{x}_i) - \textup{vol}(A)\right|,
\end{equation}
where $\mathcal{A}$ denotes the class of all axis-parallel boxes in $[0,1]^d$ which have one vertex at the origin. Compare the definition of the discrepancy to~\eqref{ac}; however, note also that $\mathbf{x}_1, \dots, \mathbf{x}_n$ are assumed to be \emph{deterministic} points. The inequality~\eqref{koks}, which is called \emph{Koksma--Hlawka inequality}, essentially says that point sets having small discrepancy lead to small errors in QMC integration. Accordingly, the question asking for the necessary number of function evaluations in order to approximate the integral of $f$ can be reduced to the question asking for the existence of low-discrepancy point sets. This theory is quite well-understood in the case of moderate $d$ and ``large'' $n$, where low-discrepancy point sets can be constructed using certain ``nets'' (see, for example,~\cite{dpd}). However, the state of our knowledge is very unsatisfactory in the case when $d$ is large and $n$ is assumed to be moderate in comparison with 
$d$, a problem which is the source of much ongoing research (for more information on this problem confer the three volumes on \emph{Tractability of Multivariate Problems} by Novak and Wo{\'z}niakowski, in particular volume II~\cite{nwt} which is devoted to linear functionals such as integration).\\

Let $n^*(\ve,d)$ denote the \emph{inverse of the star-discrepancy}; that is, the smallest possible cardinality of a point set in $[0,1]^d$ having star-discrepancy at most $\ve$, for given $\ve \in (0,1)$. A fundamental theorem of Heinrich, Novak, Wasilkowski and Wo{\'z}niakowski~\cite{hnww} asserts that 
$$
n^*(\ve,d) \leq c_{\textup{abs}} d \ve^{-2}.
$$
Surprisingly, this result follows directly from a combination of a large deviation inequality of Talagrand~\cite{tala2} with the entropy bounds for VC classes of Haussler~\cite{hauss}. Similar results for general discrepancies can be obtained in exactly the same way; however, they all contain the unspecified constant $c_{\textup{abs}}$, which is inherited from Talagrand's paper~\cite{tala2}.\footnote{It seems that in principle it should be possible to obtain a completely explicit version of the main result of~\cite{tala2} by repeating Talagrand's arguments and keeping track of all the constants involved; however, it is clear that this would require a huge amount of effort. Talagrand himself writes, somewhere near the end of his 50-page paper: \emph{``Figuring out the best possible dependence [of the constants] given by this approach requires checking many computational details and more energy than the author has left at this point.''}} The main point for writing the present note was to provide a numerically 
fully explicit version of such ``inverse of the discrepancy'' results, by avoiding this particular result of Talagrand and using instead a different, fully explicit large deviation inequality together with the explicit entropy bounds of Haussler.\\

To define a general discrepancy, we assume that $(\mathbb{X},\mathcal{X})$ is a measurable space and that $\mathbb{P}$ is a probability measure on this space. Let $\mathcal{C}$ denote a family of elements of $\mathcal{X}$, and assume that $x_1, \dots, x_n$ are $n$ elements of $\mathbb{X}$. Then the discrepancy $D_n^{(\mathcal{C},\mathbb{P})}$ of the points $x_1, \dots, x_n$ is defined as 
$$
D_n^{(\mathcal{C},\mathbb{P})}(x_1, \dots, x_n) = \sup_{C \in \mathcal{C}} \left| \frac{1}{n} \sum_{i=1}^n \mathds{1}_C (x_i) - \mathbb{P} (C) \right|.
$$

As a consequence of Theorem~\ref{th1} we will obtain the following result.

\begin{theorem} \label{th3}
Let $(\mathbb{X},\mathcal{X},\mathbb{P})$ be a probability space and let $\mathcal{C}$ be a countable family of elements of $\mathcal{X}$. Then for every given $\ve \in (0,1)$ there exist a positive integer $n$ together with points $x_1, \dots, x_n$ such that
\begin{equation} \label{co1const}
n \leq \frac{2576d (1+\ve)}{\ve^2}
\end{equation}
and 
$$
D_n^{(\mathcal{C},\mathbb{P})}(x_1, \dots, x_n) \leq \ve.
$$
\end{theorem}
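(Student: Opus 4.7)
The strategy is the standard probabilistic existence argument. Let $\xi_1, \dots, \xi_n$ be i.i.d.\ random elements of $\mathbb{X}$ with distribution $\mathbb{P}$. Observe that
$$
n \cdot D_n^{(\mathcal{C}, \mathbb{P})}(\xi_1, \dots, \xi_n) \;=\; \sup_{C \in \mathcal{C}} \Bigl| \sum_{i=1}^n (\mathds{1}_C(\xi_i) - \mathbb{P}(C)) \Bigr| \;=\; Z,
$$
the random variable from Theorem~\ref{th1}. Hence if I can exhibit an integer $n$ with $n \leq 2576\,d(1+\varepsilon)/\varepsilon^2$ for which $\textup{Pr}(Z < n\varepsilon) > 0$, then some deterministic realization $x_1, \dots, x_n$ must satisfy $D_n^{(\mathcal{C}, \mathbb{P})}(x_1, \dots, x_n) < \varepsilon$, which is what is required.

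\textbf{Application of Theorem~\ref{th1}.} I take $x = 1$, so the failure probability becomes $2/e < 1$, and keep $\eta > 0$ free for now. Using the elementary inequality $\sqrt{1+u} \leq 1 + \sqrt{u}$ for $u \geq 0$, the deviation threshold in Theorem~\ref{th1} can be bounded above by
$$
T(n, \eta) \;=\; (1+\eta)\bigl(2030\,d + \sqrt{2030\, d n}\bigr) + 2\sqrt{n} + 2.5 + 32\,\eta^{-1}.
$$
It therefore suffices to choose $n$ and $\eta$ so that $T(n, \eta) \leq n\varepsilon$. The dominant contribution is the square-root term $(1+\eta)\sqrt{2030\,dn}$; forcing just this term to be $\leq n\varepsilon$ would already require $n \geq (1+\eta)^2 \cdot 2030\,d/\varepsilon^2$. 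Since $2030 < 2576$, the budget in the theorem offers a comfortable multiplicative slack of $2576(1+\varepsilon)/\bigl((1+\eta)^2 \cdot 2030\bigr)$ that must be used to absorb the subleading terms $(1+\eta)\cdot 2030\,d$, $2\sqrt{n}$, and $32/\eta$.

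\textbf{Parameter choice.} I would take $n = \lceil 2576\,d(1+\varepsilon)/\varepsilon^2 \rceil$ and tune $\eta$ as a decreasing function of $\varepsilon$; a natural choice is $\eta$ of order $\varepsilon$, which keeps $(1+\eta)^2$ close to $1$ while making $32/\eta$ comparable in size to $1/\varepsilon$ (and hence much smaller than $n\varepsilon = 2576\,d(1+\varepsilon)/\varepsilon$). One then checks directly that under this choice $T(n, \eta) \leq n\varepsilon$ uniformly over $\varepsilon \in (0,1)$. The slack factor $(1+\varepsilon)$ appearing in the bound on $n$ is precisely what allows one to dominate the leftover $d$-linear piece $(1+\eta)\cdot 2030\,d$ and the $x$-dependent piece $32/\eta$ simultaneously.

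\textbf{Main obstacle.} There is no conceptual difficulty, only careful bookkeeping of constants. The delicate point is to make the ``budget'' $2576\,d(1+\varepsilon)/\varepsilon^2$ tight enough to be useful yet loose enough that a single choice of $\eta = \eta(\varepsilon)$ balances the square-root term against the linear-in-$d$ and the $\eta^{-1}$ terms for every $\varepsilon \in (0,1)$. I expect this to reduce to an elementary, if tedious, case distinction between small $\varepsilon$ (where the $\sqrt{dn}$ term dictates the choice of $\eta$) and $\varepsilon$ close to $1$ (where the factor $(1+\varepsilon)$ alone provides enough room and $\eta$ may be taken of order unity).
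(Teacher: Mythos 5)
Your overall strategy coincides with the paper's: sample $\xi_1,\dots,\xi_n$ i.i.d.\ from $\p$, apply Theorem~\ref{th1} with $\eta$ and $x$ chosen so that $2e^{-x}<1$, and extract a deterministic realization whose normalized discrepancy is at most $\ve$. The paper takes $n=\lceil 2575.5\,d(1+\ve)\ve^{-2}\rceil$, $\eta=1/14$ and $x=\log 2+10^{-10}$, and then verifies numerically that the deviation threshold is below $n\ve$. The entire content of this theorem lies in that verification, because the constant $2576$ leaves almost no slack, and it is precisely there that your plan does not close.

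Concretely: (i) The relaxation $\sqrt{1+u}\le 1+\sqrt{u}$ is too lossy. With $n\approx 2576\,d(1+\ve)/\ve^2$ the argument of the square root is $u=2n/(1015d)\approx 5.08\,(1+\ve)/\ve^2$, which is only about $10$ when $\ve$ is near $1$. At $\ve=1$, $d=1$ the budget is $n\ve\approx 5151$, while your $T(n,\eta)$ already gives $2030+\sqrt{2030\cdot 5151}\approx 5264>5151$ even with $\eta=0$ and with the terms $2\sqrt{nx}$ and $(2.5+32\eta^{-1})x$ deleted; the inequality $T(n,\eta)\le n\ve$ fails for $\ve$ roughly above $1/2$. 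One must keep the exact expression $1015d\bigl(1+\sqrt{1+2n/(1015d)}\bigr)$, under which $\ve$ near $1$ is in fact the \emph{tightest} case (the margin there is a few units out of $5151$), so your expectation that the factor $(1+\ve)$ makes large $\ve$ easy is backwards. (ii) Taking $\eta$ of order $\ve$ also fails: at $\ve=0.1$, $d=1$, $\eta=0.1$, $x=1$ the threshold evaluates to about $28\,900$ against a budget $n\ve\approx 28\,300$. A fixed small $\eta$ (the paper uses $1/14$) is required, and even then $x=1$ leaves essentially no margin, which is why the paper takes $x=\log 2+10^{-10}$. (iii) Your $n=\lceil 2576\,d(1+\ve)/\ve^2\rceil$ generically exceeds the bound \eqref{co1const} by the rounding; the paper rounds up from $2575.5$ and checks separately that $\lceil 2575.5\,d(1+\ve)\ve^{-2}\rceil\le 2576\,d(1+\ve)\ve^{-2}$. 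None of these is a conceptual error, but together they mean the proof as proposed does not go through: the ``tedious bookkeeping'' you defer is the theorem.
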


\emph{Remark 1:} This result, together with a variant of Koksma's inequality, proves that QMC integration is feasible even in the high-dimensional dimensional setting, and with respect to general measures. Such a general version of discrepancy theory and QMC integration is described in~\cite{ad} and~\cite{ad2}. Note that discrepancies are usually defined with respect to an \emph{uncountable} class of test sets, such as the class $\mathcal{A}$ in definition~\eqref{dn}. However, often it is possible to replace this uncountable class by a countable class without changing the value of the discrepancy (in the case of~\eqref{dn}, one may replace the class $\mathcal{A}$ by the subclass of those boxes whose upper-right corner has only rational coordinates).\\

\emph{Remark 2:} While Theorem~\ref{th3} needs to be translated into a bound for the maximal error in numerical integration by means of a Koksma--Hlawka inequality, Theorem~\ref{th2} \emph{directly} implies the existence of well-suited points for QMC integration for a VC subgraph class $\mathcal{F}$ of functions. However, for a direct application of Theorem~\ref{th2} it would be necessary to calculate the entropy of an ``interesting'' class of functions whose elements one wishes to integrate numerically. As far as I know, not much is known in this respect, but this seems to be a good starting point for further research.\\

\emph{Remark 3:} In~\cite{ad} we obtained Theorem~\ref{th3} with the upper bound in equation~\eqref{co1const} replaced by $2^{26} d \ve^{-2}$. In other words, the result in Theorem~\ref{th3} improves the previously known estimate by a factor of some tens of thousands, and brings the required number of points within reach of mathematical software. However, in the special case of the star-discrepancy as defined in~\eqref{dn}, our new general upper bound is (not surprisingly) weaker than the best bound currently known, which gives $100 d \ve^{-2}$ instead of the upper bound provided by~\eqref{co1const}; see~\cite{ac}. In this context, confer also~\cite{doerr} and ~\cite{hinw}.\\

Throughout the remaining part of this paper, the following setting will be fixed. $(\mathbb{X}, \mathcal{X})$ denotes a measurable space, $\p$ denotes a probability measure on this space, and $\xi_1, \xi_2, \xi_3, \dots$ are independent, identically distributed (i.i.d.) random variables having distribution $\p$. Furthermore, $\mathcal{C}$ denotes a countable class of elements of $\mathcal{X}$, and $\mathcal{F}$ denotes a countable family of real-valued measurable functions on $(\mathbb{X}, \mathcal{X})$. 

\section{Massart's concentration inequality for empirical processes}

The key ingredient in our proof is the following result of Massart~\cite{massart}. The main aim of Massart's paper was to provide an explicit version of an earlier result of Talagrand, which can be seen from the title \emph{About the constants in {T}alagrand's concentration inequalities for empirical processes} of Massart's paper. Massart's result is explicit in the sense that it eliminates all unspecified constants from Talagrand's result; however, it still contains a certain quantity which depends crucially on the entropy of the class of test functions. We state Massart's result below, and continue our discussion subsequently.

\begin{lemma} \label{lemmamass}
Assume that $\|f\|_\infty \leq b < \infty$ for all $f \in \mathcal{F}$. Let $Z$ denote either 
\begin{equation*} \label{zet}
\sup_{f \in \mathcal{F}} \left| \sum_{i=1}^n f(\xi_i) \right| \qquad \textrm{or} \qquad \sup_{f \in \mathcal{F}} \left| \sum_{i=1}^n f(\xi_i) - \mathbb{E} \left( f (\xi_i) \right) \right|.
\end{equation*}
Let $\sigma^2 = \sup_{f \in \mathcal{F}} \mathbb{V}(f(\xi_i))$. Then for all positive real numbers $\eta$ and $x$
$$
\p \left( Z \geq (1 + \eta) \mathbb{E}(Z) + \sigma \sqrt{2 \kappa n x} + \kappa(\eta) b x \right) \leq e^{-x},
$$
where $\kappa$ and $\kappa(\eta)$ can be taken equal to $\kappa=4$ and $\kappa(\eta)=2.5 + 32 \eta^{-1}$. Moreover, one also has 
$$
\p \left( Z \leq (1 - \eta) \mathbb{E}(Z) - \sigma  \sqrt{2 \kappa' n x} - \kappa'(\eta) b x \right) \leq e^{-x},
$$
where $\kappa'=5.4$ and $\kappa'(\eta)=2.5+43.2\eta^{-1}$.
\end{lemma}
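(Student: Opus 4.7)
The plan is to use the entropy (Ledoux--Massart) method: bound the log-Laplace transform $F(\lambda) := \log \mathbb{E}[e^{\lambda(Z - \mathbb{E}Z)}]$ by a Bernstein profile and invert via Chernoff. The two ingredients are the tensorisation of entropy and a modified log-Sobolev inequality; the supremum structure of $Z$ is what makes both quantities computable in terms of $\sigma^2$ and $b$. By countability of $\mathcal{F}$ one can measurably pick (up to arbitrarily small error) a function $f^\star = f^\star(\xi_1,\dots,\xi_n) \in \mathcal{F}$ that realises $Z$. Writing $Z^{(i)}$ for $Z$ with $\xi_i$ replaced by an independent copy $\xi_i'$, plugging $f^\star$ into $Z^{(i)}$ as a suboptimal competitor gives $(Z - Z^{(i)})_+ \leq (f^\star(\xi_i) - f^\star(\xi_i'))_+$, so $|Z - Z^{(i)}| \leq 2b$ pointwise, while an Efron--Stein-type bookkeeping controls $\sum_{i=1}^n \mathbb{E}[(Z - Z^{(i)})_+^2]$ by a quantity of order $n\sigma^2 + b\,\mathbb{E}[Z]$.

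Next I apply tensorisation of entropy, $\mathrm{Ent}(e^{\lambda Z}) \leq \sum_i \mathbb{E}[\mathrm{Ent}_i(e^{\lambda Z})]$, together with the modified log-Sobolev inequality
\[
\mathrm{Ent}_i(e^{\lambda Z}) \leq \mathbb{E}_i\bigl[\phi(-\lambda(Z - Z^{(i)}))\, e^{\lambda Z}\bigr], \qquad \phi(u) = e^u - u - 1,
\]
valid for functions of independent random variables. Combining with the previous step and using $\phi(u) \leq u^2/2$ for $u \leq 0$, a short computation yields a Bernstein-type differential inequality $(1 - b\lambda)F'(\lambda) \leq \lambda v + b F(\lambda)$ on $0 < \lambda < 1/b$, with effective variance $v$ proportional to $n\sigma^2 + b\,\mathbb{E}[Z]$. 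Integrating gives the Bennett bound $F(\lambda) \leq v\lambda^2/(2(1 - b\lambda))$.

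Chernoff inversion then produces a two-regime deviation: a sub-Gaussian contribution of order $\sigma\sqrt{nx}$ and a sub-exponential contribution of order $bx$. The $\mathbb{E}[Z]$ inside $v$ leaves a mixed cross-term of order $\sqrt{b\,\mathbb{E}[Z]\,x}$, which AM--GM splits as $\eta\,\mathbb{E}[Z] + C(\eta)\,bx$; this is the source of the $(1+\eta)$ prefactor on $\mathbb{E}[Z]$ and of the $\kappa(\eta) = 2.5 + 32\eta^{-1}$ coefficient on $bx$. The lower tail is handled symmetrically with $-\lambda$ in place of $\lambda$, but now $(Z - Z^{(i)})_-$ replaces $(Z - Z^{(i)})_+$ and no longer enjoys the clean sup-structure bound; this is what forces the slightly worse constants $\kappa' = 5.4$ and $\kappa'(\eta) = 2.5 + 43.2\eta^{-1}$.

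The main obstacle is keeping \emph{every} numerical constant under control throughout the argument: a textbook execution of the entropy method only produces a universal constant with no record of its value. Obtaining the explicit numbers $4$, $5.4$, $32$, and $43.2$ demands the sharpest form of the modified log-Sobolev inequality, an optimal choice of $\lambda$ in the Chernoff step, and careful AM--GM bookkeeping when trading $\eta\,\mathbb{E}[Z]$ against $bx$. This explicit tracking is precisely what Massart's paper~\cite{massart} contributes over Talagrand's original (unspecified-constant) concentration inequality.
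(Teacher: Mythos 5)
The paper offers no proof of this lemma at all: it is quoted verbatim as a result of Massart (\emph{About the constants in Talagrand's concentration inequalities for empirical processes}), and the citation is the entire argument. Your sketch correctly reconstructs the method behind that cited result --- the entropy method via tensorisation and the modified logarithmic Sobolev inequality, a Bernstein-type differential inequality for $\log \E e^{\lambda(Z-\E Z)}$, Chernoff inversion, and an AM--GM trade of the cross-term $\sqrt{b\,\E(Z)\,x}$ against $\eta\,\E(Z)$, which is indeed where the $(1+\eta)$ prefactor and the $\eta^{-1}$ in $\kappa(\eta)$ come from. Your diagnosis of why the lower tail has worse constants (loss of the one-sided sup-structure bound on $(Z-Z^{(i)})_-$) is also consistent with the source.

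The genuine gap is that the proposal never establishes the statement it is supposed to prove. The entire content of this lemma, relative to Talagrand's original inequality, is the specific values $\kappa=4$, $\kappa(\eta)=2.5+32\eta^{-1}$, $\kappa'=5.4$, $\kappa'(\eta)=2.5+43.2\eta^{-1}$; your write-up produces only ``a quantity of order $n\sigma^2 + b\,\E[Z]$'', ``an effective variance $v$ proportional to\dots'', and contributions ``of order'' $\sigma\sqrt{nx}$ and $bx$, and you concede explicitly that a textbook execution yields no record of the constants' values. Without fixing the constant in the Efron--Stein-type bound on $\sum_i \E[(Z-Z^{(i)})_+^2]$ (this is where the factor in $\sigma\sqrt{2\kappa n x}$ originates), specifying the exact form of $v$, and carrying out the AM--GM split with explicit weights, one cannot recover $32\eta^{-1}$ rather than some other multiple of $\eta^{-1}$, nor distinguish $\kappa=4$ from $\kappa'=5.4$. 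As it stands the argument proves a qualitative concentration inequality with unspecified constants --- precisely the statement the lemma is designed to improve upon --- so the quantitative claim remains unproven. Since the paper itself simply defers to Massart, the honest options are either to cite Massart as the paper does, or to execute the constant-tracking in full; the outline alone does not substitute for either.
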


In view of Massart's result, the main remaining part in order to obtain our desired results is to find good upper bounds for $\mathbb{E}(Z)$. Such bounds will be given in the next section.

\section{The expected value of the supremum of an empirical process} \label{sec:lemma}

The following lemma will be deduced from Lemma 13.5 of ~\cite{blm}, together with Haussler's entropy bounds for VC classes. Together with Lemma~\ref{lemmamass} it  allows us to obtain a fully explicit large deviation inequality for empirical processes indexed by sets. The case of empirical processes indexed by functions is treated subsequently.

\begin{lemma} \label{lemmablm}
Assume that $\mathcal{C}$ is a VC class with index $d$, and assume that $\p(C) \leq 1/2$ for all $C \in \mathcal{C}$. Set 
$$
Z = \sup_{C \in \mathcal{C}} \left| \sum_{i=1}^n \left( \mathds{1}_{C} (\xi_i) - \mathbb{P} (C) \right) \right|.
$$
Then
$$
\E (Z) \leq 1015 d \left(1+ \sqrt{1+\frac{2 n}{1015 d}} \right).
$$
\end{lemma}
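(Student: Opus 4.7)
The strategy is to combine Lemma~13.5 of Boucheron--Lugosi--Massart~\cite{blm} with Haussler's sharp covering number estimate for a VC class, namely
$$
N(\varepsilon,\mathcal{C},L^{2}(Q)) \;\leq\; e(d+1)(2e)^{d}\varepsilon^{-2d} \qquad (\varepsilon\in(0,1)),
$$
valid uniformly in the probability measure $Q$ on $(\mathbb{X},\mathcal{X})$. The first step is to apply Lemma~13.5 of~\cite{blm} to the class of indicator functions $\{\mathds{1}_C : C\in\mathcal{C}\}$. This lemma, via a chaining argument applied conditionally on $\xi_1,\ldots,\xi_n$ after symmetrization, bounds $\mathbb{E}(Z)$ by a Dudley-type integral of the form
$$
K\int_{0}^{\sigma_n}\sqrt{\log N(\varepsilon,\mathcal{C},L^{2}(\mathbb{P}_n))}\,d\varepsilon\cdot \sqrt{n},
$$
where $\sigma_n^{2}=\sup_{C}\mathbb{P}_n(C)$ is the empirical ``radius''. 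Plugging in Haussler's bound makes the integrand proportional to $\sqrt{2d\log(1/\varepsilon)+c(d)}$, which has an elementary antiderivative, and the integral then evaluates in closed form to a quantity of the order $\sqrt{d\,n\sigma_n^{2}}$ with a specific numerical prefactor.

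Taking expectations and applying Jensen's inequality, one obtains $\mathbb{E}(Z)\leq K'\sqrt{d\cdot\mathbb{E}[n\sigma_n^{2}]}$ for an explicit constant $K'$. At this point the hypothesis $\mathbb{P}(C)\leq 1/2$ enters, via a self-bounding step: writing $\sum_i\mathds{1}_C(\xi_i) = n\mathbb{P}(C)+\sum_i(\mathds{1}_C(\xi_i)-\mathbb{P}(C))$ and taking suprema gives the pointwise estimate
$$
n\sigma_n^{2} \;=\; \sup_{C\in\mathcal{C}}\sum_{i=1}^n\mathds{1}_C(\xi_i) \;\leq\; \frac{n}{2}+Z,
$$
so that $\mathbb{E}[n\sigma_n^{2}]\leq n/2+\mathbb{E}(Z)$. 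Substituting back into the previous display yields a quadratic inequality in the unknown $\mathbb{E}(Z)$, which one solves by the standard quadratic formula to obtain a bound of the shape $a\bigl(1+\sqrt{1+bn/a}\bigr)$. Adjusting the constant $K'$ so that $a=1015\,d$ and $b=2$ then reproduces the statement of the lemma.

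The argument is qualitatively routine; the main obstacle lies in the quantitative bookkeeping required to reach the specific constant $1015$. This means using Lemma~13.5 of~\cite{blm} in a form that picks up no spurious loss at the symmetrization step, evaluating the Dudley integral with Haussler's exact prefactor $e(d+1)(2e)^d$ sharply---possibly truncating the integral at a well-chosen intermediate scale where chaining ceases to help---and then carefully solving the resulting quadratic so that all universal constants consolidate into the stated value. A conceptual point worth emphasising is that the hypothesis $\mathbb{P}(C)\leq 1/2$ does not enter the chaining itself but only the self-bounding step above; it is precisely this step that turns a crude $\sqrt{dn}+d$ estimate into the refined $A(1+\sqrt{1+2n/A})$ shape that appears in the conclusion.
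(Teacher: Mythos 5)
Your overall strategy is the same as the paper's: both arguments rest on Lemma~13.5 of Boucheron--Lugosi--Massart combined with Haussler's entropy bound for VC classes, and you have correctly located where the hypothesis $\p(C)\le 1/2$ enters --- namely the self-bounding step $\sup_C\sum_i\mathds{1}_C(\xi_i)\le n/2+Z$, which produces the quadratic inequality and hence the $A\bigl(1+\sqrt{1+2n/A}\bigr)$ shape. The paper simply invokes BLM's Lemma~13.5 as a black box (it already contains the chaining, the Jensen step, and the solved quadratic, in the form $\E(Z^+)\le\frac{D^2}{2}\bigl(1+\sqrt{1+4\sigma^2 n/D^2}\bigr)$); your sketch reconstructs its interior, which is fine.

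There is, however, a genuine gap, and it sits exactly where the content of this lemma lies: the constant $1015$ is never derived, only asserted (``adjusting the constant $K'$ so that $a=1015\,d$''). The statement is classical and trivial up to constants; its entire point is the explicit numerical value, so the quantitative bookkeeping is not an ``obstacle'' to be acknowledged but the proof itself. The paper obtains the constant by evaluating the dyadic entropy sum $D=6\sum_{j\ge 0}2^{-j}\sqrt{H(2^{-j-3/2},\mathcal{C})}$ with Haussler's bound $H(\delta,\mathcal{C})\le 2d\log(e/\delta)+\log(e(d+1))$, showing $D\le 31.858\sqrt{d}$ and hence $D^2\le 1015\,d$. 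A second omission compounds this: BLM's bound applies to the one-sided suprema, each of $\E(Z^+)$ and $\E(Z^-)$ being at most $\frac{1015}{2}d\bigl(1+\sqrt{1+2n/(1015d)}\bigr)$, and the two-sided $Z$ of the statement is then handled via $\E(Z)\le\E(Z^+)+\E(Z^-)$; this doubling is precisely why the final constant is $1015$ rather than $1015/2$. Your sketch applies the chaining bound directly to the absolute-value supremum, so even after carrying out the entropy computation your accounting would not line up with the stated constant without this extra reduction (or a two-sided variant of the chaining lemma with its own constants).
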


\begin{proof}[Proof of Lemma~\ref{lemmablm}:~] Let
$$
Z^+ = \sup_{C \in \mathcal{C}} \left(\sum_{i=1}^n \left(\mathds{1}_C(\xi_i) - \mathbb{P}(C) \right) \right).
$$
Adopting the notation from~\cite[Lemma 13.5]{blm} we choose $\sigma^2=1/2$ and set
$$
D = 6 \sum_{j=0}^\infty 2^{-j} \sqrt{H\left(2^{-j-3/2},\mathcal{C}\right)}.
$$
Here $H(\delta,\mathcal{C})$ is the \emph{universal $\delta$-metric entropy} (also called \emph{Koltchinskii--Pollard entropy}) of $\mathcal{C}$ which by~\cite[Lemma 13.6]{blm} is bounded by
$$
H(\delta,\mathcal{C}) \leq 2d \log(e/\delta) + \log(e(d+1)), \qquad \delta>0,
$$
since by assumption $\mathcal{C}$ is a VC class with index $d$. (Note that~\cite[Lemma 13.6]{blm} is just a reformulation of Haussler's entropy bounds, that is, of~\cite[Corollary 1]{hauss}). Thus we have
$$
D \leq 6 \sum_{j=0}^\infty 2^{-j} \sqrt{2d \log(e 2^{j+3/2}) + \log(e(d+1))},
$$
and some standard calculations show that 
\begin{equation} \label{D}
D \leq 31.858 \sqrt{d} \qquad \textrm{for} \qquad d=1,2,\dots.
\end{equation}
The last inequality on page 372 of~\cite{blm} states that
$$
\E (Z^+) \leq \frac{D^2}{2} \left(1+ \sqrt{1+\frac{4 \sigma^2 n}{D^2}} \right)
$$
(note that in~\cite{blm} an additional factor $\sqrt{n}$ appears, which comes from the normalization in their definition of $Z^+$). Together with~\eqref{D} and the fact that $(31.858)^2 \leq 1015$ this implies that
\begin{equation} \label{z1b}
\E (Z^+) \leq \frac{1015}{2} d \left(1+ \sqrt{1+\frac{2n}{1015 d}} \right).
\end{equation}
As noted on page 373 of~\cite{blm}, the same bound as in~\eqref{z1b} holds for $\E(Z^-)$, where 
$$
Z^- = \sup_{C \in \mathcal{C}} \left(\sum_{i=1}^n \left(\mathbb{P}(C) - \mathds{1}_C(\xi_i) \right) \right).
$$
Thus overall we have
\begin{eqnarray*}
\E \left(\sup_{C \in \mathcal{C}} \left|\sum_{i=1}^n \left(\mathds{1}_C(\xi_i) - \mathbb{P}(C) \right) \right|\right) & \leq & \E(Z^+) + \E(Z^-) \\
& \leq & 1015 d \left(1+ \sqrt{1+\frac{2 n}{1015 d}} \right).
\end{eqnarray*}
This proves the lemma.
\end{proof}

For future reference we also state the following lemma, which is a variant of Lemma~\ref{lemmablm} without the assumption that $\p(C) \leq 1/2$ for all $C \in \mathcal{C}$. The proof of Lemma~\ref{lemmablm2} can be given in the same way as the proof of Lemma~\ref{lemmablm}.
\begin{lemma} \label{lemmablm2}
If we skip the assumption that $\p(C) \leq 1/2$ for all $C \in \mathcal{C}$ in the statement of Lemma~\ref{lemmablm}, we obtain
$$
\E (Z) \leq 914 d \left(1+ \sqrt{1+\frac{4n}{914 d}} \right).
$$
\end{lemma}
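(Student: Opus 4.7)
The plan is to mimic the proof of Lemma~\ref{lemmablm} line by line, the only substantive change being in the numerical inputs. Without the assumption $\p(C)\le 1/2$, we only know $\E(\mathds{1}_C^2(\xi_i))=\p(C)\le 1$, so the natural choice of the variance-like parameter in Lemma~13.5 of~\cite{blm} is $\sigma^2=1$ rather than $\sigma^2=1/2$. The splitting $Z\le Z^+ + Z^-$ with
$$
Z^+=\sup_{C\in\mathcal{C}}\sum_{i=1}^n\bigl(\mathds{1}_C(\xi_i)-\p(C)\bigr), \qquad Z^-=\sup_{C\in\mathcal{C}}\sum_{i=1}^n\bigl(\p(C)-\mathds{1}_C(\xi_i)\bigr),
$$
is retained, and $\E(Z^+)$ and $\E(Z^-)$ are estimated separately, each by an application of Lemma~13.5 of~\cite{blm}.

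With $\sigma^2=1$, the dyadic entropy quantity entering Lemma~13.5 of~\cite{blm} takes the form
$$
D \;=\; 6\sum_{j=0}^{\infty} 2^{-j}\sqrt{H(2^{-j-1},\mathcal{C})},
$$
where the exponent $-j-1$ replaces the exponent $-j-3/2$ that appeared in the proof of Lemma~\ref{lemmablm} (the latter corresponding to the choice $\sigma=2^{-1/2}$). Inserting Haussler's bound $H(\delta,\mathcal{C})\le 2d\log(e/\delta)+\log(e(d+1))$ and carrying out the same elementary numerical estimate as in the proof of Lemma~\ref{lemmablm}, one checks that $D^2\le 914\,d$ for every positive integer $d$. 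This routine but slightly tedious numerical verification (analogous to the bound $(31.858)^2\le 1015$ in the proof of Lemma~\ref{lemmablm}) is the main place where some care is required, and it is the only real obstacle in the proof.

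Given this value of $D$, the inequality on the bottom of page~372 of~\cite{blm} yields
$$
\E(Z^+) \;\le\; \frac{D^2}{2}\left(1+\sqrt{1+\frac{4\sigma^2 n}{D^2}}\right) \;\le\; \frac{914\,d}{2}\left(1+\sqrt{1+\frac{4n}{914\,d}}\right),
$$
and, by the same argument as in~\cite{blm} (cf.\ page~373), the analogous bound holds for $\E(Z^-)$. Adding these two estimates and using $\E(Z)\le \E(Z^+)+\E(Z^-)$ produces the claimed inequality
$$
\E(Z) \;\le\; 914\,d\left(1+\sqrt{1+\frac{4n}{914\,d}}\right),
$$
which completes the plan.
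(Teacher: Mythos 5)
Your proposal is correct and follows exactly the route the paper intends: the paper omits the proof of Lemma~\ref{lemmablm2}, stating only that it ``can be given in the same way as the proof of Lemma~\ref{lemmablm},'' and your argument carries this out faithfully, replacing $\sigma^2=1/2$ by $\sigma^2=1$ (so the entropy argument becomes $\sigma 2^{-j-1}=2^{-j-1}$) and verifying $D^2\le 914\,d$, with the worst case at $d=1$. The numerical check does come out as claimed ($D\le 30.23\sqrt{d}$ at $d=1$), so there is nothing to add.
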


The following lemma provides an upper bound for the quantity $\mathbb{E}(Z)$ in the case of empirical processes indexed by functions, subject to entropy bounds on the class of functions. The appropriate entropy bounds are contained in Haussler's paper and are stated in Lemma~\ref{lemmahau} and, in a form more suitable for our application, in Lemma~\ref{lemmaent}.

\begin{lemma}[{\cite[Proposition 3]{gine}}] \label{lemmagine}
Assume that the functions in $\mathcal{F}$ are $\mathbb{P}$-centered and that their absolute values are uniformly bounded by a constant $U$. Assume also that for all finitely supported probability measures $\mathbb{Q}$, the $L^2(\mathbb{Q})$-covering numbers satisfy 
$$
N \left(\delta, \mathcal{F},L^2(\mathbb{Q}) \right) \leq \left( \frac{A U}{\delta} \right)^v, \qquad 0 < \delta < U,
$$
for some $A > e$ and $v \geq 2$. Let $\sigma \leq U$ be such that 
$$
\sigma^2 \geq \sup_{f \in \mathcal{F}} ~\mathbb{E} (f^2(\xi_i))
$$
Then for all $n \geq 1$ for 
$$
Z = \sup_{f \in \mathcal{F}} \left| \sum_{i=1}^n f(\xi_i) \right| 
$$
we have
$$
\mathbb{E} (Z) \leq 30 \sqrt{2 v} \sqrt{n \sigma^2 \log (5AU/\sigma)} + 15^2 2^5 v U \log (5AU/\sigma).
$$
\end{lemma}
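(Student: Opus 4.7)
The plan is to follow the now-standard symmetrization--chaining strategy, combined with an appropriate moment comparison, and simply to be meticulous about constants so as to reach the explicit coefficients $30\sqrt{2v}$ and $15^{2}2^{5} v$ claimed in the statement. First I would introduce i.i.d.\ Rademacher signs $\ve_1,\dots,\ve_n$, independent of the $\xi_i$, and apply the classical symmetrization inequality for $\p$-centered classes, which yields
$$
\E(Z) \;\leq\; 2\, \E \sup_{f\in\F}\Bigl|\sum_{i=1}^n \ve_i f(\xi_i)\Bigr|.
$$
Conditionally on $\xi_1,\dots,\xi_n$, the Rademacher process on the right is sub-Gaussian with respect to the empirical $L^2$-pseudometric $d_n(f,g)^2 = \sum_i (f(\xi_i)-g(\xi_i))^2$, with diameter at most $\sqrt{n}\,\hat\sigma$, where $\hat\sigma^2 = \sup_{f\in\F} \tfrac{1}{n}\sum_i f^2(\xi_i)$.

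Next, conditionally on the sample I would run the Dudley entropy integral (or the slightly sharper generic chaining bound). Under the hypothesis $N(\delta,\F,L^2(\mathbb{Q}))\leq (AU/\delta)^v$, valid for every finitely supported $\mathbb{Q}$, one obtains
$$
\E_\ve \sup_{f\in\F}\Bigl|\sum_{i=1}^n \ve_i f(\xi_i)\Bigr| \;\leq\; C_1 \sqrt{n}\, \int_0^{\hat\sigma} \sqrt{v\log(AU/\delta)}\; d\delta \;\leq\; C_2\sqrt{v}\,\sqrt{n}\,\hat\sigma\sqrt{\log(AU/\hat\sigma)},
$$
after evaluating the integral (which behaves like $\hat\sigma\sqrt{\log(AU/\hat\sigma)}$ up to lower-order corrections). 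Taking expectation over $\xi_1,\dots,\xi_n$ and using Jensen's inequality on the concave function $t\mapsto \sqrt{t\log(AU/\sqrt{t})}$ transfers the estimate to $\E[\hat\sigma^2]$ in place of $\hat\sigma^2$.

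The main obstacle, and the step that really drives the final form of the bound, is controlling $\E[\hat\sigma^2]$ in terms of the \emph{population} variance $\sigma^2$. The bound $\E[\hat\sigma^2] \leq \sigma^2 + \tfrac{1}{n}\E\!\sup_{f\in\F}|\sum_i (f^2(\xi_i)-\E f^2(\xi_i))|$ is standard, and the contraction principle (noting $\|f\|_\infty\leq U$, so $f^2$ is $2U$-Lipschitz in $f$) lets me bound the second term by a constant multiple of $U\cdot \E(Z)$. Substituting back produces a quadratic inequality of the shape $\E(Z) \leq a\sqrt{\sigma^2 n + bU\E(Z)} + c\,v\,U\log(AU/\sigma)$, and solving it (using $\sqrt{x+y}\leq \sqrt{x}+\sqrt{y}$ to separate the $\sigma$-term from the $U$-term) delivers precisely the two-summand bound of the lemma.

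The routine but delicate work is then just bookkeeping: tracking the constants through Dudley's integral with the specific exponent $v$ and the shift $5AU/\sigma$ inside the logarithm, and choosing the numerical constant from the chaining inequality sharp enough to reach $30\sqrt{2v}$. The second-order $U\log(AU/\sigma)$ term, with coefficient $15^2\cdot 2^5 v$, arises from the contraction step together with the tail of the entropy integral below the level $\sigma/\sqrt{n}$, where one must truncate rather than integrate. I would carry out these estimates in the order (i) symmetrization, (ii) chaining with explicit constants, (iii) Jensen, (iv) contraction plus fixed-point inequality, and verify at the end that the constants indeed collapse to the stated form.
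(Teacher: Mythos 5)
The first thing to note is that the paper does not prove this statement at all: it is imported verbatim as Proposition~3 of the cited work of Gin\'e et al., so there is no in-paper argument to compare yours against. Your outline --- symmetrization, conditional sub-Gaussian chaining via the uniform $L^2(\mathbb{Q})$-entropy hypothesis, Jensen, and then control of the random variance $\hat\sigma^2$ by the contraction principle followed by solving the resulting fixed-point inequality --- is indeed the standard route and is essentially the strategy of that reference, so the \emph{approach} is sound.

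As a proof of \emph{this} lemma, however, there is a genuine gap: the entire content of the statement is the explicit constants $30\sqrt{2v}$ and $15^2 2^5 v$ (the whole paper exists because inexplicit versions of such bounds are useless for the intended application), and your argument defers precisely that part to ``bookkeeping'' without carrying it out. Several of the steps you list are exactly where constants are created or lost and cannot be waved through: (i) the symmetrization factor and the sub-Gaussian constant entering Dudley's integral; (ii) the passage from the random $\hat\sigma$ inside $\sqrt{\log(AU/\hat\sigma)}$ to the deterministic $\sigma$ in $\log(5AU/\sigma)$ --- the function $t\mapsto t\sqrt{\log(AU/t)}$ is not monotone on all of $(0,U]$, and the factor $5$ in the logarithm is precisely the slack needed to make this step legitimate, so it must be exhibited, not assumed; (iii) the contraction-principle constant for $f\mapsto f^2$ (a factor $2U$ Lipschitz bound plus the extra factor from removing absolute values) and the exact form of the quadratic/fixed-point inequality, which together determine the coefficient $15^2 2^5 v$ of the second-order term. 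Until these are executed with numbers, you have reproved the qualitative Gin\'e--Guillou moment bound but not the quantitative statement in the lemma; the correct resolution here is either to carry out the computation in full or to do what the paper does and cite the result.
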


As noted, the required entropy bounds can be deduced from Haussler's results. 
\begin{lemma}[{\cite[Corollary 3]{hauss}}] \label{lemmahau}
Let $\mathbb{Q}$ be a probability distribution on $(\mathbb{X}, \mathcal{X})$. Assume that the functions in $\mathcal{F}$ have only values in $[0,1]$, and that $\mathcal{F}$ is a VC subgraph class with index $d$. Then for every $\delta>0$ the $L^1$-packing number of $\mathcal{F}$ satisfies
$$
\mathcal{M} \left(\delta, \mathcal{F}, L^1(\mathbb{Q})\right) \leq e (d+1) \left( \frac{2e}{\delta} \right)^d.
$$
\end{lemma}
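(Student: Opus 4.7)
The plan is to reduce the $L^1$-packing question for $\mathcal{F}$ to a packing question for a VC class of sets, and then to combine the Sauer--Shelah lemma with a random sampling argument. First, because each $f\in\mathcal{F}$ takes values in $[0,1]$, Fubini's theorem gives, for any $f,g\in\mathcal{F}$,
$$
\|f-g\|_{L^1(\mathbb{Q})} \;=\; \int_{\mathbb{X}}|f-g|\,d\mathbb{Q} \;=\; (\mathbb{Q}\otimes\lambda)\bigl(C_f \,\triangle\, C_g\bigr),
$$
where $\lambda$ is Lebesgue measure on $[0,1]$ and $C_f,C_g$ are intersected with the slab $\mathbb{X}\times[0,1]$ (which does not change the symmetric difference since $f,g$ take values in $[0,1]$). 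Since $\mathcal{F}$ is a VC subgraph class of index $d$, the family $\mathcal{D}=\{C_f:f\in\mathcal{F}\}$ is a VC class of index $d$ in $\mathbb{X}\times\mathbb{R}$. Hence it suffices to bound the $\delta$-packing number of an arbitrary VC class of index $d$ in the symmetric-difference pseudometric of the probability measure $\nu:=\mathbb{Q}\otimes\lambda$.

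Next I would apply the standard random sampling device. Let $D_1,\dots,D_N\in\mathcal{D}$ be $\delta$-separated with respect to $\nu$, and draw an i.i.d.\ sample $Y_1,\dots,Y_m$ from $\nu$. For each fixed pair $i\neq j$, the probability that $D_i$ and $D_j$ induce the same trace on the sample, i.e.\ that no $Y_k$ lies in $D_i\triangle D_j$, is at most $(1-\delta)^m\leq e^{-m\delta}$. On the other hand, Sauer--Shelah bounds the total number of distinct traces of $\mathcal{D}$ on any $m$-element set by $\sum_{k=0}^{d}\binom{m}{k}\leq (em/d)^d$ whenever $m\geq d$. If we can choose $m$ such that with positive probability all $N$ traces are pairwise distinct, then we immediately deduce $N\leq (em/d)^d$. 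A crude union bound makes the expected number of colliding pairs at most $\binom{N}{2}e^{-m\delta}$, which is below one as soon as $m\gtrsim \delta^{-1}\log N$.

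Feeding this choice of $m$ back into Sauer--Shelah yields a packing estimate of the form $(C/\delta)^{d}(\log(1/\delta))^{d}$, which differs from the bound of Lemma~\ref{lemmahau} by a spurious logarithmic factor. Removing this factor, and thereby obtaining the clean dependence $(2e/\delta)^d$ together with the polynomial prefactor $e(d+1)$, is the main obstacle and the actual content of Haussler's \cite{hauss} proof: he refines the probabilistic step by iterating the sampling over a geometric sequence of scales and tracking not merely whether two traces coincide, but the sizes of the resulting equivalence classes of traces, so that the $\log$ factor coming from the union bound is absorbed. This delicate combinatorial refinement is the step I would quote directly from \cite{hauss}; everything up to it is the routine subgraph reduction plus Sauer--Shelah outlined above.
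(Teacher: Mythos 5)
The paper gives no proof of this lemma at all --- it is quoted verbatim as Corollary~3 of \cite{hauss} --- so your outline, which correctly reduces the $L^1(\mathbb{Q})$ packing problem for $[0,1]$-valued functions to symmetric-difference packing of the subgraph class under $\mathbb{Q}\otimes\lambda$ (the symmetric difference does lie entirely in the slab $\mathbb{X}\times[0,1]$, so the identity you use is right) and then defers the log-free packing bound for VC classes of sets to Haussler, is consistent with, and more informative than, what the paper does. The one inaccuracy is your description of how Haussler removes the spurious logarithmic factor: his argument does not iterate the sampling over a geometric sequence of scales, but rather bounds the edge density of the one-inclusion (unit-distance) graph of the class traced on a random sample of size $m$ by $d$ times the number of vertices and exploits a conditioning trick between samples of sizes $m$ and $m-1$; since you explicitly quote that step from \cite{hauss} rather than claim to prove it, this is a mischaracterized citation, not a gap.
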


\begin{lemma} \label{lemmaent}
Let $\mathbb{Q}$ be a probability distribution on $(\mathbb{X}, \mathcal{X})$. Assume that the functions in $\mathcal{F}$ are $\mathbb{P}$-centered and that their absolute values are uniformly bounded by 1. Assume also that $\mathcal{F}$ is a VC subgraph class of index $d$. Then for every $\delta>0$ 
$$
N\left(\delta, \mathcal{F},L^2(\mathbb{Q})\right) \leq \left(\frac{11}{\delta}\right)^{2d}.
$$
\end{lemma}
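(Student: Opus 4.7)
The plan is to derive the stated $L^2$-covering bound by combining Haussler's $L^1$-packing bound for $[0,1]$-valued VC subgraph classes (Lemma~\ref{lemmahau}) with two standard reductions: a rescaling that brings the range into $[0,1]$ while preserving the VC subgraph index, and the passage from $L^1$-packing to $L^2$-covering using the uniform boundedness.

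First I would rescale. For each $f \in \mathcal{F}$ set $\tilde f = (f+1)/2$, which takes values in $[0,1]$. The affine map $(x,s)\mapsto(x,(s+1)/2)$ is a bijection of $\mathbb{X}\times\mathbb{R}$ carrying $C_f$ onto $C_{\tilde f}$, so a finite set $\{(x_i,t_i)\}$ is shattered by $\{C_{\tilde f}:f\in\mathcal{F}\}$ if and only if $\{(x_i,2t_i-1)\}$ is shattered by $\{C_f:f\in\mathcal{F}\}$. Hence $\tilde{\mathcal F}$ is again a VC subgraph class of index $d$. Applying Lemma~\ref{lemmahau} to $\tilde{\mathcal F}$ and using $\|\tilde f_1-\tilde f_2\|_1=\tfrac12\|f_1-f_2\|_1$, a substitution gives
\[
\mathcal M(\varepsilon, \mathcal F, L^1(\mathbb Q))\;\leq\;e(d+1)\Bigl(\frac{4e}{\varepsilon}\Bigr)^d.
\]

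Next I would convert $L^2$ to $L^1$. Because $|f|\leq 1$ for every $f\in\mathcal F$, pointwise $(f_1-f_2)^2\leq 2|f_1-f_2|$, so $\|f_1-f_2\|_2^2\leq 2\|f_1-f_2\|_1$. A $\delta$-separated subset of $\mathcal F$ in $L^2(\mathbb Q)$ is therefore $\delta^2/2$-separated in $L^1(\mathbb Q)$, and using the elementary fact $N(\delta,\cdot)\leq\mathcal M(\delta,\cdot)$ (a maximal separated set is automatically a net), the previous inequality yields
\[
N(\delta,\mathcal F,L^2(\mathbb Q))\;\leq\;\mathcal M(\delta^2/2,\mathcal F,L^1(\mathbb Q))\;\leq\;e(d+1)\Bigl(\frac{8e}{\delta^2}\Bigr)^d.
\]

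It remains to check $e(d+1)(8e)^d\leq 121^d$, equivalently $e(d+1)\leq(121/(8e))^d$. Numerically $121/(8e)\approx 5.564$ while at $d=1$ the left side is $2e\approx 5.437$, so the inequality barely holds at $d=1$; for $d\geq 2$ the right side grows exponentially and the left side only linearly, so it holds for all $d\geq 1$. The only obstacle is this piece of book-keeping: with this route the constant $11$ is essentially tight at $d=1$, so one must take care not to lose factors in the affine rescaling and in the $L^2$-to-$L^1$ inequality; everything else is a routine composition of three standard facts (VC-subgraph invariance under affine reparametrization of the range, the pointwise inequality $(f-g)^2\leq 2|f-g|$ for functions bounded by $1$, and the packing/covering comparison).
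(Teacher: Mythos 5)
Your proof is correct and follows essentially the same route as the paper: rescale to $[0,1]$-valued functions, apply Haussler's $L^1$-packing bound (Lemma~\ref{lemmahau}), pass from $L^2$ to $L^1$ via the uniform bound, and compare packing with covering numbers, arriving at the same intermediate estimate $e(d+1)(8e/\delta^2)^d$ and the same final check $e(d+1)\leq (121/(8e))^d$. The only divergence is where the factors of $2$ are placed: you use the correct pointwise inequality $(f-g)^2\leq 2|f-g|$ together with $N(\delta,\cdot)\leq \mathcal{M}(\delta,\cdot)$, whereas the paper invokes $\int|f-g|^2\,d\mathbb{Q}\leq\int|f-g|\,d\mathbb{Q}$ (which as stated requires $|f-g|\leq 1$ rather than merely $|f|,|g|\leq 1$) and compensates by using the lossier relation $N(\delta,\cdot)\leq\mathcal{M}(\delta/2,\cdot)$ --- so your bookkeeping is, if anything, the more defensible version of the same argument.
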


\begin{proof}[Proof of Lemma~\ref{lemmaent}:~] There are three differences between the entropy bound in Lemma~\ref{lemmahau} and the desired bound in Lemma~\ref{lemmaent}. First, the bound in Lemma~\ref{lemmahau} is stated for packing numbers, while we need a bound for covering numbers. Second, the bound in Lemma~\ref{lemmahau} is for the $L^2$-distance, while we need a bound for the $L^1$-distance. Third, the bound in Lemma~\ref{lemmahau} is formulated for a class of functions having values in $[0,1]$, while we need a bound for centered functions (which also have non-negative values).\\

Let $\mathcal{F}$ be the class of functions from the assumption of the lemma, and let $\mathcal{G}$ be the class of all functions of the form $g(x) = (f(x)+1)/2$, where $f \in \mathcal{F}$. Then $\mathcal{G}$ is a class of functions having values in $[0,1]$, and $\mathcal{G}$ also is a VC subgraph class of index $d$. It is quite easily seen that for every measure $\mathbb{Q}$ we have
$$
\mathcal{M} \left(\delta, \mathcal{F}, L^1(\mathbb{Q})\right) \leq \mathcal{M} \left(\delta/2, \mathcal{G}, L^1(\mathbb{Q})\right)
$$
for every $\delta>0$. By a well-known relation between packing numbers and covering numbers (see for example~\cite[p. 98]{vdv}) we have
$$
N \left(\delta,\mathcal{F},L^1(\mathbb{Q})\right) \leq \mathcal{M} \left(\delta/2, \mathcal{F}, L^1(\mathbb{Q})\right)
$$
for every $\delta>0$. Furthermore we have
$$
N \left(\delta,\mathcal{F},L^2(\mathbb{Q})\right) \leq N \left(\delta^2,\mathcal{F},L^1(\mathbb{Q})\right), \qquad \delta>0,
$$
since all functions in $\mathcal{F}$ have values in $[-1,1]$; this follows directly from
$$
\int |f-g|^2 ~d \mathbb{Q} \leq \int |f-g| ~d \mathbb{Q}.
$$
Consequently from Lemma~\ref{lemmahau} we can deduce that overall we have
\begin{eqnarray*}
N\left(\delta,\mathcal{F},L^2(\mathbb{Q})\right) & \leq & \mathcal{M} \left(\delta^2/4, \mathcal{F}, L^1(\mathbb{Q})\right) \\
& \leq & e (d+1) \left( \frac{8e}{\delta^2} \right)^d \\
& \leq & \left(\frac{11}{\delta}\right)^{2d},
\end{eqnarray*}
which is valid for all $\delta \in (0,1)$ and $d \geq 1$.
\end{proof}

From a combination of Lemma~\ref{lemmagine} and Lemma~\ref{lemmaent} we directly obtain the following proposition.

\begin{proposition} \label{prop}
Assume that the functions in $\mathcal{F}$ are $\mathbb{P}$-centered and that their absolute values are uniformly bounded by 1. Assume also that $\mathcal{F}$ is a VC subgraph class of index $d$. Let $\sigma$ be such that 
$$
\sigma^2 \geq \sup_{f \in \mathcal{F}} ~\mathbb{E} (f^2(\xi_i)),
$$
 Then for all $n \geq 1$ for 
$$
Z = \sup_{f \in \mathcal{F}} \left| \sum_{i=1}^n f(\xi_i) \right| 
$$
we have
$$
\mathbb{E} (Z) \leq 60 \sqrt{d} \sqrt{n \sigma^2 \log (55/\sigma)} + 15^2 2^6 d \log (55/\sigma).
$$
If we are willing to choose $\sigma=1$, then we have
$$
\mathbb{E} (Z) \leq 121 \sqrt{d n} + 58000 d.
$$\end{proposition}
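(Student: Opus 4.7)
The plan is simply to combine the two preceding lemmas in the obvious way. Lemma~\ref{lemmaent} gives the $L^2(\mathbb{Q})$-covering number bound
$$
N\bigl(\delta,\mathcal{F},L^2(\mathbb{Q})\bigr) \leq \left(\frac{11}{\delta}\right)^{2d},
$$
which we match against the hypothesis of Lemma~\ref{lemmagine} (whose general form is $(AU/\delta)^v$) by choosing $U = 1$, $A = 11$, and $v = 2d$. I would first verify the side conditions of Lemma~\ref{lemmagine}: $A = 11 > e$, and $v = 2d \geq 2$ since $d \geq 1$; the requirement $\sigma \leq U = 1$ is automatic from $\sigma^2 \geq \sup_f \mathbb{E}(f^2(\xi_i))$ together with $\|f\|_\infty \leq 1$ (and is trivially satisfied in the second claim where $\sigma = 1$ is chosen).

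Plugging these values into Lemma~\ref{lemmagine} gives
$$
\mathbb{E}(Z) \leq 30\sqrt{2 \cdot 2d}\,\sqrt{n\sigma^2 \log(5 \cdot 11/\sigma)} + 15^2 \cdot 2^5 \cdot 2d \cdot \log(55/\sigma),
$$
which simplifies directly to $60\sqrt{d}\sqrt{n\sigma^2 \log(55/\sigma)} + 15^2 \cdot 2^6 d \log(55/\sigma)$, the first asserted inequality.

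For the second assertion, I would just substitute $\sigma = 1$ into the first bound and note the numerical estimate $\log 55 = 4.0073\ldots < 4.01$. This yields $60\sqrt{d}\sqrt{n}\sqrt{\log 55} \leq 60 \cdot 2.002 \sqrt{dn} \leq 121 \sqrt{dn}$ for the first term, while for the second term $14400 \log 55 \leq 14400 \cdot 4.01 = 57744 \leq 58000$, giving the stated bound $121\sqrt{dn} + 58000\, d$.

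There is no real obstacle here: the content is in Lemma~\ref{lemmagine} and Lemma~\ref{lemmaent}, and the proposition is only their composition together with a trivial numerical rounding. The only point requiring a bit of care is the matching of the parameters $(A,U,v)$ so that the constants come out cleanly, and the verification that $\log 55$ is slightly above $4$ so that the advertised integer constants $121$ and $58000$ provide valid upper bounds.
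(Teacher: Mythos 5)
Your proof is correct and is exactly the paper's argument: the paper gives no separate proof of Proposition~\ref{prop} beyond stating that it follows directly from combining Lemma~\ref{lemmagine} (with $U=1$, $A=11$, $v=2d$) and Lemma~\ref{lemmaent}, and your parameter matching and numerical checks ($\log 55 < 4.01$, so $60\sqrt{\log 55}\leq 121$ and $14400\log 55\leq 58000$) are all sound. One small quibble: the condition $\sigma\leq U=1$ is not ``automatic'' from $\sigma^2\geq\sup_f\mathbb{E}(f^2(\xi_i))$ (that inequality bounds $\sigma$ from below, not above), so strictly one must additionally \emph{choose} $\sigma\leq 1$ --- an imprecision the paper's own statement shares --- but this does not affect the argument.
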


\section{Proof of Theorems~\ref{th1},~\ref{th2} and~\ref{th3}.}

\begin{proof}[Proof of Theorem~\ref{th1}:~]
We split the class $\mathcal{C}$ into two subclasses $\mathcal{C}_1$ and $\mathcal{C}_2$ such that 
$$
\mathbb{P} (C) \leq \frac{1}{2} \qquad \textrm{for all $C \in \mathcal{C}_1$, \quad and } \qquad \mathbb{P} (C) > \frac{1}{2} \qquad \textrm{for all $C \in \mathcal{C}_2$.}
$$
Now we define $\mathcal{C}_3$ as the class of all the sets $\mathbb{X} \backslash C,~C \in \mathcal{C}_2$. Then trivially $\mathcal{C}_1$ has VC index at most $d$, and it is easily seen that $\mathcal{C}_3$ also has VC index at most $d$ (note that the VC index does not change when replacing a class of sets by the class of all complements of these sets). Furthermore, we also have
$$
\mathbb{P} (C) \leq \frac{1}{2} \qquad \textrm{for all $C \in \mathcal{C}_3$}.
$$
Consequently we may apply Lemma~\ref{lemmablm} to the classes $\mathcal{C}_1$ and $\mathcal{C}_3$. Note that we have
\begin{eqnarray}
Z & = & \sup_{C \in \mathcal{C}} \left|\sum_{i=1}^n \left(\mathds{1}_C(\xi_i) - \mathbb{P}(C) \right) \right| \nonumber\\
& \leq &  \max \left(\sup_{C \in \mathcal{C}_1} \left|\sum_{i=1}^n \left(\mathds{1}_C(\xi_i) - \mathbb{P}(C) \right) \right|,~ \sup_{C \in \mathcal{C}_3} \left|\sum_{i=1}^n \left(\mathds{1}_C(\xi_i) - \mathbb{P}(C) \right) \right|\right). \label{c1c2*}
\end{eqnarray}
Combining the result from Lemma~\ref{lemmablm} with~\eqref{c1c2*} and Lemma~\ref{lemmamass} we obtain that for all positive real numbers $\eta$ and $x$ we have
\begin{eqnarray*}
\textup{Pr} \left( Z > (1 + \eta) 1015 d \left(1+ \sqrt{1+\frac{2 n}{1015 d}} \right) + \sqrt{n \kappa x} + \kappa(\eta) x \right) \leq 2 e^{-x},
\end{eqnarray*}
where $\kappa$ and $\kappa(\eta)$ can be taken equal to $\kappa=4$ and $\kappa(\eta)=2.5 + 32 \eta^{-1}$. This proves Theorem~\ref{th1}.\\
\end{proof}

\begin{proof}[Proof of Theorem~\ref{th2}:~]
Under the assumptions of Theorem~\ref{th2}, by combining Lemma~\ref{lemmamass} and Proposition~\ref{prop} we obtain
\begin{eqnarray*}
& & \textup{Pr} \bigg( Z > (1 + \eta) \left( 60 \sqrt{d} \sqrt{n \sigma^2 \log (55/\sigma)} + 15^2 2^6 d \log (55/\sigma) \right) \qquad \qquad \qquad \qquad \\
& & \qquad \qquad \qquad \qquad \qquad  \qquad + \sigma \sqrt{8 n x} + \left( 2.5 + 32 \eta^{-1} \right) x \bigg) \leq e^{-x},
\end{eqnarray*}
for every $\eta>0$ and $x>0$. If we are willing to choose $\sigma=1$, then we obtain
$$
\textup{Pr} \left( Z > (1 + \eta) \left( 121 \sqrt{d} \sqrt{n} + 58000 d \right) + \sqrt{8 n x} + \left(2.5 + 32 \eta^{-1}\right) x \right) \leq e^{-x},
$$
for every $\eta>0$ and $x>0$.\\
\end{proof}

\begin{proof}[Proof of Theorem~\ref{th3}:~]
Let
$$
n = \left\lceil 2575.5 ~ d (1+\ve) \ve^{-2} \right\rceil,
$$
and apply Theorem~\ref{th1} with the specific choice of $\eta=1/14$ and $x = (\log 2)+10^{-10}$. Then the right-hand side in the conclusion of Theorem~\ref{th1} is smaller than 1, which means that there exists a realization of $x_1,\dots, x_n$ of the random variables $\xi_1, \dots, \xi_n$ such that 
\begin{eqnarray}
D_n^{(\mathcal{C},\mathbb{P})}(x_1, \dots, x_n)  & = & \frac{1}{n} \sup_{C \in \mathcal{C}} \left|\sum_{i=1}^n \left(\mathds{1}_C(x_i) - \mathbb{P}(C) \right) \right| \nonumber\\
& \leq & \frac{1}{n} \left(1015(1+\eta) d \left(1+ \sqrt{1+\frac{2n}{1015 d}} \right) + 2 \sqrt{n x} + \left(2.5+32\eta^{-1}\right) x\right). \label{ourd}
\end{eqnarray}
Now it is a matter of computational routine to check that with our choice of $n, ~\eta$ and $x$ the term in~\eqref{ourd} is actually dominated by $\ve$. Furthermore, one can easily check that 
$$
\left\lceil 2575.5 ~ d (1+\ve) \ve^{-2} \right\rceil \leq 2576 d (1+\ve) \ve^{-2}
$$
for all $\ve \in (0,1)$ and $d=1,2,\dots$, which concludes the proof of the theorem.
\end{proof}

\bibliography{Concentration_inequalities_bib}
\bibliographystyle{abbrv}

\end{document}